\newcommand{\subsectionruninhead}{\@startsection{subsection}{2}{0mm}
{-\baselineskip}{-0mm}{\bf\large}}
\newcommand{\subsubsectionruninhead}{\@startsection{subsubsection}{3}{0mm}
{-\baselineskip}{-0mm}{\bf\normalsize}}
\newtheorem*{theorem*}{Theorem}
\newtheorem*{inclinationlemma}{Palis' inclination lemma}
\newtheorem*{smaletheorem}{Theorem 1 (SmaleÕs spectral decomposition theorem).}
\newtheorem*{elementary}{Elementary perturbation lemma}
\newtheorem*{algebraic}{Pugh's algebraic lemma}
\newtheorem{theorem}{Theorem}
\newtheorem{proposition}{Proposition}[section]
\newtheorem*{proposition*}{Proposition}
\newtheorem*{corollary*}{Corollary}
\newtheorem{lemma}[proposition]{Lemma}
\newtheorem*{claim*}{Claim}
\newtheorem*{questions}{Questions}
\theoremstyle{definition}
\newtheorem{definition}[proposition]{Definition}
\theoremstyle{remark}
\numberwithin{equation}{section}
 \def\RR{{\mathbb R}}  
 \def\ZZ{{\mathbb Z}}
  \def\cG{\mathcal{G}}  
    \def\cU{\mathcal{U}}
\def\cD{\mathcal{D}}    \def\cV{\mathcal{V}}
\newcommand{\diff}{\operatorname{Diff}}
\newcommand{\fork}{\mbox{~$|$\hspace{ -.46em}$\cap$}~}
\begin{document}
\title{Transitivity and topological mixing for $C^1$ diffeomorphisms}
\author{Flavio Abdenur \and Sylvain Crovisier}
\date{\today}
\maketitle

\emph{\begin{flushright}
To Steve Smale, for his influence on differentiable dynamics. \hspace{0cm}\mbox{}
\end{flushright}}

\begin{abstract}
We prove that, on connected compact manifolds, both $C^1$-generic conservative diffeomorphisms and $C^1$-generic transitive diffeomorphisms are topologically mixing.
This is obtained through a description of the periods of a homoclinic class and by
a control of the period of the periodic points given by the closing lemma.
\vskip 2mm

\begin{description}
\item[\bf Key words:] Homoclinic class, hyperbolic diffeomorphism, transitivity, topological mixing, closing lemma.
\item[\bf MSC 2000:] 37B20, 37C05, 37C20, 37C29, 37C50, 37D05.
\end{description}
\end{abstract}

\section{Introduction}
In his seminal dissertation about differentiable dynamical systems~\cite{smale-dynamics},
Smale described the recurrence of hyperbolic diffeomorphisms:

\begin{theorem}[Smale's spectral decomposition theorem]
Consider a diffeomorphism $f$ of a compact manifold.
If the non-wandering set $\Omega(f)$ is hyperbolic and contains a dense set of periodic points then it decomposes uniquely
as the finite union $\Omega(f)=\Omega_1\cup\dots\cup\Omega_s$ of disjoint, closed, invariant subsets on each of which $f$
is topologically transitive.
\end{theorem}

Recall that the restriction of $f$ to an invariant compact set $\Lambda$
is \emph{topologically transitive} if there exists a dense forward orbit, or equivalently,
if for any non-empty open sets
$U,V$ of $\Lambda$, there exists $n\geq 1$ such that $f^n(U)\cap V\neq \emptyset$.
Later on, Bowen noticed~\cite{bowen} that each piece $\Omega_i$ admits a further decomposition
$\Omega_i=X_{i,1}\cup\dots\cup X_{i,\ell_i}$ into disjoint closed subsets
on each of which $g=f^{\ell_i}$ is \emph{topologically mixing}:
for any non-empty open sets
$U,V$ of $X_{i,j}$, there exists $n_0\geq 1$ such that $f^n(U)\cap V\neq \emptyset$
for any $n\geq n_0$.
\medskip

Let $\diff^1(M)$ denote the space of $C^1$-dif\-fe\-o\-mor\-phisms
of a connected compact boundaryless manifold $M$ endowed with the $C^1$-topology.
Our goal is to study the recurrence of its generic non-hyperbolic elements. A robust obstruction to the transitivity is the existence of a trapping region, i.e. a
non-empty open set $U\neq M$
such that $f(\overline U)\subset U$.
When this obstruction does not occur, it follows from~\cite{bc}
that the generic dynamics is transitive on the whole manifold.
More precisely, in this case $M$ is a homoclinic class (see the section~\ref{s.period} below)
implying that an iterate $g= f^n$ of $f$ is topologically mixing.
Our goal here is to show that this is also the case for the first iterate $f$:

\begin{theorem}\label{t.transitive}
There exists a dense $G_\delta$ subset $\cG\subset \diff^1(M)$
such that any transitive diffeomorphism $f\in \cG$ is topologically mixing.
\end{theorem}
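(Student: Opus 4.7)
The plan is to reduce the theorem, via the result of~\cite{bc} quoted in the introduction, to an arithmetic statement about periods inside a homoclinic class, and then to establish this statement using a quantitative version of the closing lemma.

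For a $C^1$-generic transitive $f$ one may assume, by~\cite{bc}, that $M$ itself is a homoclinic class $H(p_f)$ of some hyperbolic periodic point $p_f$, and that some power $f^n$ is already topologically mixing on $M$. What must be shown is that this $n$ can be taken equal to $1$. Following the model of Smale--Bowen in the hyperbolic case, I would define the \emph{period} $\ell(p_f,f)$ of $H(p_f)$ as the positive generator of the subgroup of $\ZZ$ spanned by the periods of the hyperbolic periodic orbits homoclinically related to $p_f$, and prove a structural statement of the form $H(p_f)=X_1\cup\dots\cup X_\ell$, where the pieces $X_i$ are cyclically permuted by $f$ and $f^\ell$ is topologically mixing on each of them. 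Since $M$ is connected and $f$ is transitive, such a decomposition forces $\ell=1$ to be equivalent to $f$ being topologically mixing; thus the theorem reduces to showing that, on a dense $G_\delta$, the period $\ell(p_f,f)$ equals $1$.

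To force $\ell(p_f,f)=1$ on a residual set, I would exhibit a $C^1$-dense collection of diffeomorphisms for which $H(p_f)$ contains two hyperbolic periodic orbits of coprime periods. Because $f$ is transitive, there exist non-periodic recurrent points in $M$ whose return times to a prescribed neighborhood realize any desired gcd --- in particular gcd equal to $1$ --- since otherwise the collection of return times would impose a cyclic structure incompatible with a dense orbit on a connected manifold. A \emph{refined closing lemma} would then be applied to perturb $f$ so that two such recurrent points become hyperbolic periodic orbits whose periods lie in prescribed congruence classes and which, for a generic transitive $f$, automatically fall inside $H(p_f)$. A standard Baire-category argument, using a countable basis of open sets of $M$, then upgrades this denseness to a dense $G_\delta$.

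The principal obstacle is the closing lemma with control of the period: Pugh's original result produces a nearby periodic orbit but gives no a~priori information on its period. One must revisit Pugh's perturbation scheme and track how the period of the manufactured orbit is determined by the initial recurrence time, extracting enough quantitative information to place that period in a prescribed arithmetic class modulo $\ell$. Coupling this quantitative closing lemma with the arithmetic freedom afforded by transitivity, and controlling the homoclinic relation between the newly created orbits and the continuation of $p_f$ through standard $C^1$-generic tools (the connecting lemma and persistence of transverse intersections), should complete the proof.
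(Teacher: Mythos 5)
Your proposal has the same skeleton as the paper's argument: reduce to $M=H(O)$ via~\cite{bc}, define the period $\ell$ of the class as the gcd of the periods of the related hyperbolic orbits, decompose the class into pieces on which $f^\ell$ is topologically mixing, and invoke a closing lemma with control on the period, finishing with a Baire argument. The genuine gap is in the step where you force $\ell=1$. You assert that transitivity on the connected manifold $M$ yields recurrent points whose return times have gcd $1$, ``since otherwise the collection of return times would impose a cyclic structure incompatible with a dense orbit on a connected manifold.'' This is not a valid topological deduction: when the return times of some open set all lie in $\ell\ZZ$ with $\ell\geq 2$, the induced cyclic decomposition $M=D_0\cup\dots\cup D_{\ell-1}$ consists of closed sets with pairwise disjoint \emph{interiors} which may still overlap along a nowhere dense set, and such a configuration is compatible with both connectedness and a dense orbit (transitive but not totally transitive homeomorphisms of connected spaces exist). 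Connectedness kills the decomposition only once the pieces are known to be \emph{disjoint}, and proving disjointness is exactly the hard part. The paper handles it by observing that an overlap point $x\in\Lambda_1\cap\Lambda_i$ is precisely where the hypothesis of the controlled closing lemma can be verified (mixing of $f^\ell$ on $\Lambda_1$ combined with the overlap produces orbit segments with endpoints near $x$ and length $\not\equiv 0\pmod\ell$); the contradiction is then dynamical, not topological: the resulting periodic orbit of period $\not\equiv 0\pmod \ell$ is pulled back to $f$ itself by a generic semicontinuity property, joined to $O$ by a generically creatable cycle, and proposition~\ref{p.cycle} together with the local constancy of $g\mapsto\ell(O_g)$ forces its period into $\ell(O)\ZZ$, a contradiction. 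Your proposal is missing this entire mechanism, and without it the claim that the relevant return times have gcd $1$ is unsupported.

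Two further points. First, you ask the refined closing lemma to place the new period in a \emph{prescribed} congruence class modulo $\ell$; the shortcut procedure on which such a lemma rests only preserves the weaker property ``not a multiple of $\ell$'' (when a pseudo-orbit loop of length $n$ with $\ell\nmid n$ is cut in two, at least one sub-loop has length not a multiple of $\ell$, but its residue cannot be chosen), and it is this weaker property that must carry the argument. Second, the periodic orbit is produced for a perturbation $g$ of $f$, so one needs a generic semicontinuity statement (the paper's set $K_{\ell,U}$) to transfer its existence back to $f$; your Baire sketch does not address this transfer.
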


A similar statement was obtained in~\cite{aab} for flows but the case of diffeomorphisms is
more difficult:
the proof requires closing and connecting lemmas in order to build segments of orbit
which visit successively two given regions $U,V$.
For technical reasons, the obtained orbits may be shorter than what is expected
(see~\cite{survey})
so that the intersections between $f^n(U)$ and $V$ could occur only at some
particular times $n$, breaking down the topological mixing.
The main point of the present paper is thus a closing lemma with control of the connecting time
(section~\ref{s.closing}).
\medskip

Many examples of non-hyperbolic robustly transitive diffeomorphisms have
been constructed, see for instance~\cite{bd} and \cite{shub}.
Theorem \ref{t.transitive} trivially implies that these dynamics become
topologically mixing \emph{modulo an arbitrarily small $C^1$-perturbation}.
In some particular cases, there is a stronger result: among robustly transitive 
partially hyperbolic diffeomorphisms with one-dimensional center bundle,
the set topologically mixing dynamics contains an open and dense subset,
see~\cite{hhu} and~\cite[corollary 3]{bdu}.

As far as we know, all of the known examples of robustly transitive diffeomorphisms are topologically mixing. This raises the following questions:

\begin{questions} \text{ }
\begin{itemize}
\item[1)]
Is \emph{every} robustly transitive diffeomorphism topologically mixing?

\item[2)]
Failing that, is topological mixing at least a $C^1$-open-and-dense condition within the space of all robustly transitive diffeomorphisms?

\end{itemize}
\end{questions}
\medskip

When $M$ is connected and $\omega$ is a volume or a symplectic form,
we denote by $\diff^1_\omega(M)$ the space of the $C^1$-diffeomorphisms which preserve
$\omega$, endowed with the $C^1$-topology.
The results stated before still hold in the conservative setting and moreover
any $C^1$-generic diffeomorphism is transitive~\cite{abc,bc}.
One thus gets:

\begin{theorem}\label{t.conservatif}
Any diffeomorphism in a dense $G_\delta$ subset $\cG_\omega\subset \diff^1_\omega(M)$
is topologically mixing.
\end{theorem}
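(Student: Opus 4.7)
The plan is to derive Theorem \ref{t.conservatif} by combining the conservative analog of Theorem \ref{t.transitive} with the already-known genericity of transitivity in $\diff^1_\omega(M)$. Since the statement here does not take transitivity as a hypothesis, both ingredients are needed: transitivity must be supplied generically, and it must then be upgraded to mixing.

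First, I would invoke the results of \cite{abc,bc} (cited in the excerpt just before the statement) to fix a dense $G_\delta$ subset $\cG_1\subset \diff^1_\omega(M)$ consisting of diffeomorphisms for which $M$ is a single homoclinic class, and in particular which are transitive. Next, I would revisit the proof of Theorem \ref{t.transitive} and check that each step admits a conservative version. The Baire framework is unchanged. The structural description of the periods of a homoclinic class, which the abstract advertises as one of the two main ingredients, is a purely topological-dynamical statement that transfers verbatim to $\diff^1_\omega(M)$. The second ingredient, the closing lemma with control of the connecting time developed in Section~\ref{s.closing}, must be re-established in the conservative category by replacing Pugh's $C^1$ closing lemma and the Hayashi-type connecting lemma with their conservative counterparts (Pugh--Robinson in the volume-preserving case, Arnaud and Bonatti--Crovisier in the symplectic case), and by implementing the elementary perturbation lemma and Pugh's algebraic lemma inside $\diff^1_\omega(M)$. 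This yields a dense $G_\delta$ subset $\cG_2\subset \diff^1_\omega(M)$ such that every transitive $f\in \cG_2$ is topologically mixing. Setting $\cG_\omega := \cG_1\cap \cG_2$ finishes the proof: each $f\in \cG_\omega$ is transitive by $\cG_1$, hence mixing by $\cG_2$.

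The main obstacle is verifying that the closing lemma with controlled connecting time survives the conservative constraint. In the dissipative case one freely uses $C^1$-small perturbations supported on arbitrarily small balls to adjust orbit lengths; under a volume-preserving or symplectic constraint the perturbations must respect a flux/cohomological condition and their effect on itineraries is more rigid. The delicate point is to arrange the algebraic step of the construction so that the period of the periodic point produced by the closing lemma can still be prescribed modulo a given integer, while keeping the perturbation inside $\diff^1_\omega(M)$ and supported in a chosen small neighborhood. Once this control is secured, the combinatorial arithmetic on periods carried out for Theorem~\ref{t.transitive} applies unchanged and delivers topological mixing.
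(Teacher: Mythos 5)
Your overall strategy is the paper's: combine the generic transitivity of conservative diffeomorphisms from \cite{abc,bc} with a conservative version of the ``generically, transitive implies mixing'' statement, and intersect the two residual sets. (The paper phrases this by deducing Theorem~\ref{t.conservatif} from Theorem~\ref{t.main} applied to $\Lambda=M$, using connectedness of $M$ to force $\ell=1$, but that is the same argument.) You also correctly single out the closing lemma with time control as the place where the conservative constraint must be respected; in the paper this is already built in (the elementary perturbation lemma and Theorem~\ref{t.closing} are stated in $\diff^1_\omega(M)$).

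There is, however, a genuine gap in your step ``revisit the proof of Theorem~\ref{t.transitive} and check that each step admits a conservative version; the Baire framework is unchanged.'' The non-conservative proof relies on the Kupka--Smale property that \emph{all periodic points are hyperbolic} (item~\ref{kupka-smale1} in Section~4.1): it is used to apply the closing lemma at a periodic point $x\in\Lambda_1\cap\Lambda_i$, and, more seriously, to guarantee that the periodic orbit $O'$ produced with period not a multiple of $\ell$ is hyperbolic, so that the cycle-creation and period-reduction mechanism (items~\ref{cycle} and \ref{period-robust} together with Proposition~\ref{p.cycle}) can be applied to reach a contradiction. When $\omega$ is symplectic (including area forms on surfaces), this property is \emph{not} generic: by Newhouse's theorem, elliptic periodic points are generically dense outside the Anosov case. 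The paper must therefore replace item~\ref{kupka-smale1} by two substitutes: (1') all periodic points are non-resonant, which is exactly why Theorem~\ref{t.closing} is formulated for non-resonant rather than hyperbolic periodic points; and (1'') every periodic orbit whose period is not a multiple of $\ell$ is approximated by a \emph{hyperbolic} periodic orbit whose period is still not a multiple of $\ell$ --- proved via generating functions and a Newhouse-type argument, with the least-common-multiple bookkeeping needed to keep the new period off $\ell\ZZ$. Your proposal does not address this failure of generic hyperbolicity of periodic points, and without (1'') the arithmetic on periods that rules out $\Lambda_1\cap\Lambda_i\neq\emptyset$ does not go through in the symplectic case.
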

\medskip

We in fact obtain a version of the previous statement for \emph{locally maximal} sets,
i.e. invariant compact sets $\Lambda\subset M$ having a neighborhood $U$
such that $\Lambda=\cap_{i\in \ZZ} f^i(U)$. Conley has proved~\cite{conley}
for homeomorphisms of $\Lambda$ that the non-existence of a trapping region is equivalent to \emph{chain-transitivity}: for any $\varepsilon>0$, there exists a $\varepsilon$-dense periodic sequence $(x_0,\dots, x_n=x_0)$ in $\Lambda$ which is a $\varepsilon$-pseudo orbit,
that is satisfies $d(f(x_i),x_{i+1})<\varepsilon$ for any $0\leq i<n$.
As a consequence of~\cite{bc}, for $C^1$-generic diffeomorphisms, any maximal invariant set which is chain-transitive is also transitive. Generalizing Bowen's result for hyperbolic
diffeomorphisms, we prove:

\begin{theorem}\label{t.main}
There exists a dense $G_\delta$ subset $\cG\subset \diff^1(M)$
(or $\cG_\omega\subset \diff^1_\omega(M)$) of diffeomorphisms $f$
such that any chain-transitive locally maximal set $\Lambda$ decomposes uniquely
as the finite union $\Lambda= \Lambda_1\cup\dots\cup \Lambda_\ell,$
of disjoint compact sets on each of which $f^\ell$ is topologically mixing.
\medskip

Moreover, for $1\leq i\leq \ell$,
any hyperbolic periodic $p,q\in \Lambda_i$ with same stable dimension satisfy:
\begin{itemize}
\item[--] $\ell$ is the smallest positive integer such that $W^u(f^\ell(p))\cap
W^s(p)\cap\Lambda\neq \emptyset$,
\item[--] $\Lambda_i$ coincides with
the closure of $W^u(p)\cap W^s(q)\cap \Lambda$.
\end{itemize}
\end{theorem}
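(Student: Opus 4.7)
The plan is to recognize $\La$ as a homoclinic class, carry out a period decomposition modelled on Bowen's spectral theorem, and then promote transitivity of $f^\ell$ on each piece to topological mixing using the controlled closing lemma of Section~\ref{s.closing}. I would fix a dense $G_\delta$ subset $\cG\subset \diff^1(M)$ (resp.\ $\cG_\omega\subset \diff^1_\omega(M)$) on which the generic results of \cite{bc} hold: for $f\in \cG$, every chain-transitive locally maximal set $\La$ equals the homoclinic class $H(p,f)$ of some hyperbolic periodic point $p\in \La$, and any two hyperbolic periodic points in $\La$ of the same stable dimension are homoclinically related.

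Fix such a $p$ and let $S_p = \{n\geq 1 : W^u(f^n(p))\cap W^s(p)\cap \La \neq \emptyset\}$. Concatenating heteroclinic intersections via Palis' inclination lemma shows that $S_p$ is stable under addition; hence every element of $S_p$ is a multiple of its smallest element $\ell$, and moreover $S_p$ contains every sufficiently large multiple of $\ell$. I would set $\La_i := \overline{W^u(p)\cap W^s(f^i(p))\cap \La}$ for $0\leq i<\ell$ and verify the structural properties. Disjointness of the $\La_i$ follows from minimality of $\ell$: any intersection point $y\in \La_i\cap \La_j$ with $i<j<\ell$ would yield, via an inclination-lemma argument near $p$, an element $j-i$ of $S_p$. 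The equivariance $f(\La_i)=\La_{(i+1)\bmod \ell}$ is immediate from $f(W^s(f^i(p)))=W^s(f^{i+1}(p))$. Covering $\La=\bigcup_i \La_i$ follows from the density in $\La$ of periodic points $q$ homoclinically related to $p$: any such $q$ lies in $\overline{W^u(p)\cap W^s(q)\cap \La}$, and concatenating heteroclinic orbits $p\to q\to p$ places $q$ in some $\La_i$. Independence from the base point---namely $\overline{W^u(p)\cap W^s(q)\cap \La}=\La_i$ for any hyperbolic periodic $q\in \La_i$ of the same stable dimension---follows symmetrically from homoclinic relatedness of $p$ and $q$.

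The heart of the proof is the topological mixing of $f^\ell$ on each $\La_i$. Given nonempty open sets $U,V$ meeting $\La_i$, genericity of $f$ provides hyperbolic periodic points $p'\in U\cap \La_i$ and $q'\in V\cap \La_i$ homoclinically related to $p$, and the previous step furnishes a transverse intersection $x\in W^u(p')\cap W^s(q')\cap \La$. Iterating along this connection yields $f^n(U)\cap V\neq \emptyset$ for some $n\in \ell\,\ZZ$, but the naive inclination-lemma argument only reaches intersection times constrained by the periods of $p'$ and $q'$---not every sufficiently large multiple of $\ell$. The controlled closing/connecting lemma of Section~\ref{s.closing} removes this gap by producing, for generic $f\in \cG$, true orbits inside $\La$ connecting $U$ to $V$ of any prescribed sufficiently large length in $\ell\,\ZZ$. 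This control on the period of the produced orbit is the main obstacle of the proof: without it one would only obtain transitivity, not mixing, of $f^\ell$. The remainder of the argument is a generic-dynamics transcription of Bowen's spectral decomposition theorem.
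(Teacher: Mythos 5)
There is a genuine gap, and it sits exactly where the paper's main difficulty lies: the disjointness of the pieces. You dismiss it by claiming that a point $y\in \La_i\cap\La_j$ ``would yield, via an inclination-lemma argument near $p$, an element $j-i$ of $S_p$''. But $y$ is only a \emph{limit} of heteroclinic points of the two types; it need not lie on any invariant manifold of $p$, and in the absence of hyperbolicity there is no local product structure that converts the coincidence of the two closures at $y$ into a transverse intersection of $W^u(f^{j-i}(p))$ with $W^s(p)$. This is precisely the step for which the whole perturbative apparatus of the paper is built: given $x\in\La_1\cap\La_i$, one uses transitivity of $f^\ell$ on $\La_1$ to find orbit segments in $U$ returning near $x$ with length not a multiple of $\ell$, applies the closing lemma with time control (theorem~\ref{t.closing}) to create, after a $C^1$-perturbation, a periodic orbit in $U$ whose period is not a multiple of $\ell$, transfers this back to $f$ itself via the semicontinuity of the sets $K_{\ell,U}$ (item~6 of the generic properties), and finally reaches a contradiction by creating a cycle between $O$ and the new orbit (item~4) and invoking proposition~\ref{p.cycle} together with the local constancy of the period under perturbation (item~7). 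None of this machinery appears in your disjointness step, and no soft argument replaces it.

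Two further problems. First, you have inverted the difficulty: once the pieces are correctly defined as the iterates $f^i(h(p))$ of the pointwise homoclinic class $h(p)=\overline{W^u(p)\fork W^s(p)}$, the topological mixing of $f^\ell$ on each piece is the \emph{easy} part and follows from the inclination lemma alone (proposition~\ref{p.mixing}); no closing lemma is needed there, and in any case theorem~\ref{t.closing} does not produce ``true orbits inside $\La$ connecting $U$ to $V$ of any prescribed sufficiently large length in $\ell\ZZ$'' --- it produces, after perturbation, a periodic point whose period is \emph{not} a multiple of $\ell$, and it is used only in the contradiction argument above. Second, your definition $\La_i=\overline{W^u(p)\cap W^s(f^i(p))\cap\La}$ cannot be right: for a Kupka--Smale generic $f$ these intersections are all transverse, and by proposition~\ref{p.intersection} the set $W^u(p)\fork W^s(f^i(p))$ is empty unless $i\in\ell\ZZ$, so your pieces $\La_1,\dots,\La_{\ell-1}$ would be empty. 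The correct pieces are $f^i(h(p))$, equivalently the closures of $W^u(f^i(p))\fork W^s(f^i(q))\cap\La$ for $q$ homoclinically related to $p$ with $W^u(p)\fork W^s(q)\neq\emptyset$ (lemma~\ref{l.pointwise}).
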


Clearly, theorems~\ref{t.transitive} and~\ref{t.conservatif}
follow from theorem~\ref{t.main}.

\section{The period of a homoclinic class}\label{s.period}
Let $f$ be a $C^1$-diffeomorphism and $O$ be a hyperbolic periodic orbit.
We denote by $W\fork W'$ the set of transversal intersection points between
two submanifolds $W,W'\subset M$.

\paragraph{\bf 2.1\quad Homoclinic class.}
The \emph{homoclinic class} $H(O)$ of $O$
is the closure of the set of transverse intersection
points between the stable and unstable manifolds $W^s(O)$ and $W^u(O)$.
We refer to~\cite{newhouse} for its basic properties, which we now recall:
\begin{itemize}
\item[--] Two hyperbolic periodic orbit $O_1,O_2$ are \emph{homoclinically related}
if $W^s(O_1)$ intersects trans\-ver\-sal\-ly $W^u(O_2)$ and $W^u(O_2)$ intersects transversally
$W^s(O_1)$. This defines an equivalence relation on the set of hyperbolic periodic orbits.
\item[--] $H(O)$ is the closure of the union of the periodic orbits homoclinically related
to $O$.
\item[--] If $O,O'$ are homoclinically related, $H(O)$ coincides with the closure of the set of transversal
intersections between $W^u(O)$ and $W^s(O')$.
\item[--] A homoclinic class is a transitive invariant set.
\end{itemize}

\paragraph{\bf 2.2 \quad Period of a homoclinic class.}
The \emph{period} $\ell(O)\geq 1$ of the homoclinic class $H(O)$ of $O$
is the greatest common divisor of the periods of the hyperbolic periodic points homoclinically related to $O$. The group $\ell(O).\ZZ$ is called the \emph{set of periods} of $H(O)$.
We have the following characterization:
\emph{for $p\in O$,
and $n\in \ZZ$, the manifolds $W^u(f^n(p))$ and $W^{s}(p)$
have a transversal intersection if and only if $n\in \ell(O).\ZZ$.}
More generally:

\begin{proposition}\label{p.intersection}
Consider a hyperbolic periodic point $q$ whose orbit is homoclinically related to $O$ and such that $W^u(p)\fork W^s(q)\neq \emptyset$.
Then $W^u(f^n(q))\fork W^{s}(p)\neq \emptyset$ if and only if $n\in \ell(O).\ZZ$.
In particular $W^u(q)$ intersects transversally $W^s(p)$.
\end{proposition}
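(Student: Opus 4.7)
The plan rests on two standard ingredients: the chain principle for transversal heteroclinic intersections, namely that $W^u(r_1)\fork W^s(r_2)\neq\emptyset$ together with $W^u(r_2)\fork W^s(r_3)\neq\emptyset$ imply $W^u(r_1)\fork W^s(r_3)\neq\emptyset$ for hyperbolic periodic points $r_1,r_2,r_3$ (a routine consequence of Palis' inclination lemma); and the characterization of $\ell(O)$ recalled just above the proposition, which I may apply not only at $p$ but also at $q$, since $q$ is homoclinically related to $O$ and hence $\ell(O_q)=\ell(O)$.

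For the \emph{only if} direction, assume $W^u(f^n(q))\fork W^s(p)\neq\emptyset$. Pushing the given transverse intersection $W^u(p)\fork W^s(q)\neq\emptyset$ forward by $f^n$ produces $W^u(f^n(p))\fork W^s(f^n(q))\neq\emptyset$, and chaining this with the assumption through the periodic point $f^n(q)$ yields $W^u(f^n(p))\fork W^s(p)\neq\emptyset$. The characterization at $p$ then forces $n\in\ell(O)\cdot\ZZ$.

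For the \emph{if} direction, fix $a,b$ with $W^u(f^a(q))\fork W^s(f^b(p))\neq\emptyset$, supplied by the homoclinic relation between $O$ and $O_q$, and set $m:=b-a$. Applying $f^{-b}$ gives $W^u(f^{-m}(q))\fork W^s(p)\neq\emptyset$; chaining with the given $W^u(p)\fork W^s(q)\neq\emptyset$ through $p$ yields $W^u(f^{-m}(q))\fork W^s(q)\neq\emptyset$, whence the characterization at $q$ forces $m\in\ell(O)\cdot\ZZ$. Now, given any $n\in\ell(O)\cdot\ZZ$, pushing the original intersection by $f^{n-a}$ produces $W^u(f^n(q))\fork W^s(f^{n+m}(p))\neq\emptyset$; since $n+m\in\ell(O)\cdot\ZZ$, the characterization at $p$ supplies $W^u(f^{n+m}(p))\fork W^s(p)\neq\emptyset$, and a final chain through $f^{n+m}(p)$ delivers $W^u(f^n(q))\fork W^s(p)\neq\emptyset$. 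The ``in particular'' clause is the case $n=0$.

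The one genuine difficulty is the phase identification $m=b-a\in\ell(O)\cdot\ZZ$: the bare homoclinic relation between $O$ and $O_q$ supplies a transverse intersection at a~priori unrelated indices on the two orbits, and this offset must be reconciled with the distinguished phase singled out by the hypothesis $W^u(p)\fork W^s(q)\neq\emptyset$; the characterization of $\ell(O)$ applied at $q$ is exactly the tool that effects this match. Every other step reduces to iterating transverse intersections by $f$ and invoking the $\lambda$-lemma chain principle.
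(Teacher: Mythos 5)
Your chaining steps are all individually correct, and the reduction of the general statement to the ``characterization'' stated just before the proposition is a valid piece of bookkeeping. The problem is that this characterization is not a previously established or cited result: it is exactly the special case $q\in O$ of the proposition you are asked to prove (the paper's ``more generally'' signals this), and the paper's proof is written precisely to establish the characterization and the proposition simultaneously, from first principles. By invoking the characterization as a black box at $p$ (both directions) and again at $q$, you have assumed the entire substance of the statement and kept only the easy translation between base points --- a translation the paper's proof also performs, when it shows that the set $G_{p,q}=\{n: W^u(f^n(q))\fork W^s(p)\neq\emptyset\}$ does not depend on the choice of $q$ and coincides with $A=\{n: W^u(f^n(p))\fork W^s(p)\neq\emptyset\}$.

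What is missing is the proof that $A=\ell(O)\cdot\ZZ$. The inclusion $A\supseteq\ell(O)\cdot\ZZ$ requires showing that $A$ is a subgroup of $\ZZ$ containing the period of \emph{every} hyperbolic periodic orbit homoclinically related to $O$; this can indeed be done with the chain principle you use, together with the observation that $A$ is stable under translation by the period $r$ of $p$, so that $-n=(r-1)n-rn\in A$ whenever $n\in A$. The reverse inclusion $A\subseteq\ell(O)\cdot\ZZ$ is the genuinely hard step and cannot be obtained by chaining transverse intersections alone: starting from a point $x\in W^u(f^n(p))\fork W^s(p)$ one must \emph{manufacture a new hyperbolic periodic orbit}, homoclinically related to $O$, whose period is congruent to $n$ modulo $r$; only then does the definition of $\ell(O)$ as a greatest common divisor of periods force $\ell(O)\mid n$. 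The paper does this by applying Smale's homoclinic theorem to embed $p$ and the orbit of $x$ in a locally maximal hyperbolic set and then shadowing a periodic pseudo-orbit of the appropriate length. Nothing in your proposal plays this role, so as written the argument is circular; it would become a complete (and tidy) proof only if the characterization at a single point of a single orbit were first proved along these lines.
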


This proposition is a consequence
of Smale's theorem on transversal homoclinic points~\cite{smale-homoclinic}
and of Palis' inclination lemma~\cite{palis}
(or $\lambda$-lemma).

\begin{smaletheorem}
Consider a local diffeomorphism $f$, a hyperbolic fixed point $p$ and
a transverse homoclinic intersection $x\in W^s(p)\fork W^u(p)$.
Then, in any neighborhood of $\{p\}\cup\{f^k(x)\}_{k\in \ZZ}$,
there exists, for some iterate $f^n$,
a hyperbolic set $K$ containing $p$ and $x$.
\end{smaletheorem}

\begin{inclinationlemma}
Let $p$ be a hyperbolic fixed point and $N\subset M$ be a submanifold which intersects
$W^s(p)$ transversally. Then for any compact disc $D\subset W^u(p)$
there exists a sequence $(D_k)$ of discs of $N$ and an increasing sequence $(n_k)$ of positive integers
such that $f^{n_k}(D_k)$ converges to $D$ in the $C^1$-topology.
\end{inclinationlemma}

\begin{proof}[Proof of proposition~\ref{p.intersection}]
Let $p,q$ be two hyperbolic periodic points whose orbits are homoclinically related and
assume that $W^u(p)\fork W^s(q)\neq \emptyset$. Let $G_{p,q}$ be the set of integers $n$
such that $W^u(f^n(q))\fork W^s(p)\neq \emptyset$.

The set $G_{p,q}$ is invariant by addition. Indeed if $n\in G_{p,q}$,
then $W^u(f^n(q))\fork W^s(p)$ and $W^u(f^n(p))\fork W^s(f^n(q))$ are non-empty.
The inclination lemma implies that $W^s(p)$ accumulates on $W^s(f^n(q))$
so that it transversally intersects $W^u(f^n(p))$.
If moreover $m\in G_{p,q}$, we have
$W^u(f^{n+m}(q))\fork W^s(f^n(p))\neq \emptyset$, so that
$W^s(p)$ intersects transversally $W^u(f^{n+m}(q))$ and $n+m\in G_{p,q}$.

The set $G_{p,q}$ is invariant by subtraction by the period $r$ of $p$.
Hence, for $n\in G_{p,q}$, the opposite
$-n=(r-1).n-r.n$ also belongs to $G_{p,q}$. So $G_{p,q}$ coincides with $G_{q,p}$ and is
a group.

If $q'$ is another hyperbolic periodic point whose orbit is homoclinically related to those of $p,q$
and satisfies $W^u(q')\fork W^s(p)\neq \emptyset$, then $G_{p,q}=G_{p,q'}$.
Indeed the stable and the unstable manifolds
of $q,q'$ intersect transversally, and the unstable manifolds of $f^n(q)$ and $f^n(q')$
intersect the same stable manifolds.
Consequently the group $G=G_{p,q}$ contains all the periods of the hyperbolic
periodic orbits homoclinically related to the orbit $O$ of $p$.
In particular, $G$ contains $\ell(O).\ZZ$.

Conversely, let us consider $n\in G$ and an intersection point
$x\in W^u(f^n(p))\fork W^s(p)$.
One defines a local diffeomorphism $g$ which coincides with $f^r$
in a (fixed) neighborhood of $p$ and which sends
an iterate $x^u=f^{-n-k_u.r}(x)\in W^u(p)$ onto an iterate
$x^s=f^{k_s.r}(x)\in W^s(p)$. Since by Smale's homoclinic theorem
the orbits of $x^s, x^u, p$ for $g$
are contained in a hyperbolic set, one can shadow a pseudo-orbit
$p, g^{-m}(x^s),g^{-m+1}(x^s),\dots, g^{m-1}(x^s),p$ by a hyperbolic periodic orbit 
that is homoclinically related to $p$.
By construction this orbit is contained in a hyperbolic periodic orbit $O'$
of $f$ that is homoclinically related to $O$ and whose period has the form
$n+k.r$ for some $k\in \ZZ$, where $r$ is the period of $p$.
This implies that $n$ belongs to $\ell(O).\ZZ$, so that $G=\ell(O).\ZZ$.
\end{proof}

\paragraph{\bf 2.3 \quad Pointwise homoclinic class.}
If $p$ is a point of the hyperbolic periodic orbit $O$,
its \emph{pointwise homoclinic class} $h(p)$ is the
closure of the set of transverse intersection
points between the manifolds $W^s(p)$ and $W^u(p)$: this set is in general \textbf{not}
invariant by $f$.

\begin{lemma}\label{l.pointwise}
If the orbit of a hyperbolic periodic point $q$ is homoclinically related to $O$
and $W^u(p),W^s(q)$ have a transverse intersection point, then $h(p)$ coincides with the closure of the set of transversal
intersections between $W^u(p)$ and $W^s(q)$. In particular $h(p)=h(q)$.
\end{lemma}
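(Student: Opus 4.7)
The plan is to establish the double inclusion $\overline{W^u(p)\fork W^s(q)} = h(p)$ and then deduce $h(p)=h(q)$. By Proposition~\ref{p.intersection}, the assumption $W^u(p)\fork W^s(q)\neq\emptyset$ already implies $W^u(q)\fork W^s(p)\neq\emptyset$, so both pairs of leaves intersect transversally and one may fix $x\in W^u(p)\fork W^s(q)$ and $y\in W^u(q)\fork W^s(p)$. Replacing $f$ by a common iterate $g=f^{rs}$ (with $r,s$ the periods of $p,q$) makes both $p$ and $q$ hyperbolic fixed points without changing their invariant manifolds; I may then apply the inclination lemma to $g$ or to $g^{-1}$. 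In the latter form it reads: if $N$ is transverse to the \emph{unstable} manifold of a fixed point $a$, then every compact disc of $W^s(a)$ is the $C^1$-limit of backward iterates $g^{-n_k}(D_k)$ with $D_k\subset N$.

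For the inclusion $\overline{W^u(p)\fork W^s(q)}\subset h(p)$, apply the backward form at $q$ with $N=W^s(p)$, which is transverse to $W^u(q)$ at $y$. Because $p$ is $g$-fixed, the invariance $g^{-n}(W^s(p))=W^s(p)$ forces the approximating discs $g^{-n_k}(D_k)$ to remain inside $W^s(p)$ while $C^1$-converging to a chosen disc of $W^s(q)$ through $x$. Since $W^u(p)$ meets $W^s(q)$ transversally at $x$, this transversality is preserved under small $C^1$-perturbations of the stable leaf, producing transverse intersections of $W^u(p)$ with $W^s(p)$ arbitrarily close to $x$, so $x\in h(p)$. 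The reverse inclusion is entirely symmetric: for $z\in W^u(p)\fork W^s(p)$, apply the backward inclination lemma at $p$ with $N=W^s(q)$, transverse to $W^u(p)$ at $x$; the approximating discs now lie inside $W^s(q)$ and $C^1$-approach a disc of $W^s(p)$ through $z$, whence transversality with $W^u(p)$ at $z$ yields nearby points of $W^u(p)\fork W^s(q)$.

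To derive $h(p)=h(q)$, apply the forward inclination lemma at $p$ with $N=W^u(q)$ (transverse to $W^s(p)$ at $y$): any compact disc of $W^u(p)$ is $C^1$-approximated by discs $g^{n_k}(D_k)\subset W^u(q)$, so every $x\in W^u(p)\fork W^s(q)$ is a limit of transverse intersection points of $W^u(q)$ with $W^s(q)$. This gives $h(p)\subset h(q)$, and the reverse inclusion follows by swapping $p$ and $q$.

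The argument is essentially bookkeeping once the right fixed point and submanifold $N$ are chosen for each invocation; the main delicate point I foresee is orienting each application of the inclination lemma so that the iterates stay inside the manifold one wishes to approximate, which requires using $g^{-1}$ precisely when comparing stable leaves and $g$ itself when comparing unstable leaves.
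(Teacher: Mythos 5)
Your proof is correct and follows the same basic reduction as the paper's: invoke Proposition~\ref{p.intersection} to get $W^u(q)\fork W^s(p)\neq\emptyset$, then pass to the common iterate $f^{rs}$ for which $p$ and $q$ are hyperbolic fixed points that are homoclinically related. The only difference is that the paper then simply cites the classical property recalled in section~2.1 (the homoclinic class of two homoclinically related orbits is the closure of the set of transverse heteroclinic intersections), whereas you re-derive exactly that property from the inclination lemma; your derivation, including the care taken with the forward versus backward orientation of each application, is sound.
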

\begin{proof}
By proposition~\ref{p.intersection} $W^u(q)$ and $W^s(p)$ have a transverse intersection point.
If $n,m$ are the periods of $p$ and $q$, then for $f^{nm}$ the points $p,q$ are fixed,
homoclinically related and their homoclinic class coincide with $h(p), h(q)$ and with the set of transversal intersections between $W^u(p)$ and $W^s(q)$.
\end{proof}

The following proposition decomposes the homoclinic classes in the form
$\Lambda_1\cup \dots \cup \Lambda_\ell$ such that $f^\ell$
is topologically mixing on each piece $\Lambda_i$. However, a priori the pieces are not disjoint.

\begin{proposition}\label{p.mixing}
Let $p\in O$ and $\ell=\ell(O)$ be the period of the homoclinic class.
Then:
\begin{itemize}
\item[--] $H(O)$ is the union of the iterates $f^k(h(p))$;
\item[--] $h(p)$ is invariant by $f^\ell$;
\item[--] the restriction of $f^\ell$ to $h(p)$ is topologically mixing;
\item[--] if $f^j(h(p))\cap f^k(h(p))$
has non-empty interior in $H(O)$, then $f^j(h(p))= f^k(h(p))$.
\end{itemize}
\end{proposition}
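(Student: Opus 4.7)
The plan is to prove the four assertions in order. The first two are algebraic consequences of Proposition~\ref{p.intersection} and Lemma~\ref{l.pointwise}; the third is the substantive dynamical step and will be the main obstacle; the fourth is a soft consequence of the third.

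I first observe the equivariance $h(f^k(p))=f^k(h(p))$, which is immediate from the definition since $f^k$ carries $W^s(p)\fork W^u(p)$ onto $W^s(f^k(p))\fork W^u(f^k(p))$. For (2), I take $k=\ell$: by Proposition~\ref{p.intersection} the intersection $W^u(p)\fork W^s(f^\ell(p))$ is non-empty, so Lemma~\ref{l.pointwise} applied with $q=f^\ell(p)$ gives $h(f^\ell(p))=h(p)$, hence $f^\ell(h(p))=h(p)$. For (1), I decompose $W^u(O)\cap W^s(O)=\bigcup_{0\leq j,k<r} W^u(f^k(p))\cap W^s(f^j(p))$ (where $r$ is the $f$-period of $p$) and use Proposition~\ref{p.intersection} to restrict the transverse part to pairs $(j,k)$ with $k-j\in\ell\ZZ$. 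For each such pair, the closure of $W^u(f^k(p))\fork W^s(f^j(p))$ is $f^j(h(p))$: apply $f^{-j}$, then invoke Lemma~\ref{l.pointwise} in its symmetric form (the closure of $W^u(q)\fork W^s(p)$ equals $h(q)=h(p)$ when $q$ is homoclinically related to $p$). Since $f^\ell(h(p))=h(p)$, the index $k$ in $H(O)=\bigcup_k f^k(h(p))$ runs through $\ZZ/\ell\ZZ$.

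Item (3) is the main step. The plan is to reduce to the case where the period of the homoclinic class is $1$, by passing to $g:=f^\ell$. After a suitable choice of representatives within the $f$-orbits, every $f$-homoclinically related periodic orbit becomes $g$-homoclinically related to the $g$-orbit of $p$, and its $g$-period equals its $f$-period divided by $\ell$. By definition of $\ell$, these $g$-periods are positive integers with gcd $1$, so the $g$-period of the $g$-pointwise homoclinic class of $p$ is $1$. It then suffices to prove the following reduced claim: \emph{if the period of the homoclinic class of $p$ is $1$, then $f$ is topologically mixing on $h(p)$}. Given non-empty open sets $U,V\subset h(p)$, I choose transverse homoclinic points $u\in U$ and $v\in V$, together with finitely many periodic orbits homoclinically related to $O$ whose $f$-periods have gcd $1$ (these exist by the period-$1$ hypothesis). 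A repeated application of Smale's homoclinic theorem then produces a single hyperbolic basic set $K$ containing $p$, $u$, $v$ and all those periodic orbits, whose period as a basic set is $1$. Bowen's refinement of the spectral decomposition gives that $f|_K$ is topologically mixing, which forces $f^n(U)\cap V\neq\emptyset$ for all sufficiently large $n$. The delicate point here is assembling a single horseshoe that absorbs the prescribed periodic orbits together with the chosen homoclinic points; this is achieved by iteratively enlarging the naive Smale horseshoe around $\{p,u,v\}$.

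For (4), suppose $f^j(h(p))\cap f^k(h(p))$ contains a set $W$ open in $H(O)$, hence open in $f^j(h(p))$ and in $f^k(h(p))$. The intersection is closed and $f^\ell$-invariant (each piece is, by (2)), so every iterate $f^{n\ell}(W)$ lies in $f^k(h(p))$. The mixing of $f^\ell$ on $f^j(h(p))$ established in (3) forces $\bigcup_{n\geq 0} f^{n\ell}(W)$ to be dense in $f^j(h(p))$; its closure is therefore contained in $f^k(h(p))$ and equals $f^j(h(p))$, giving $f^j(h(p))\subset f^k(h(p))$. The reverse inclusion follows by symmetry.
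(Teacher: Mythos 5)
Your items (1), (2) and (4) are correct; they are essentially the paper's argument in different packaging (the paper proves (1)--(2) via a single claim identifying the closure of $W^u(f^k(p))\fork W^s(f^m(p))$ with an iterate of $h(p)$, and proves (4) by a partition argument on the interiors $A_k$ of the sets $f^k(h(p))$ in $H(O)$; your direct density argument for (4) is an equally valid variant). For item (3) you take a genuinely different route. The paper's proof is a single, direct application of the inclination lemma: pick $x\in U\cap(W^u(p)\fork W^s(p))$ and $y\in V\cap(W^u(p)\fork W^s(p))$, take a disc $D\subset W^u(p)\cap U$ through $x$, and observe that for \emph{every} large $n$ the disc $f^{n\ell}(D)\subset W^u(f^{n\ell}(p))$ accumulates in the $C^1$-topology on a disc of $W^u(p)$ through $y$ (using that $n\ell$ lies in the set of periods, so $W^u(f^{n\ell}(p))$ accumulates on $W^u(p)$), hence crosses the local stable manifold of $y$ transversally inside $V$, producing a point of $f^{n\ell}(U)\cap V\cap h(p)$. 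Your route instead reduces to period $1$ via $g=f^\ell$ (this reduction is sound) and then imports the hyperbolic theory: build one transitive locally maximal hyperbolic set $K$ containing $p$, $u$, $v$ and finitely many homoclinically related orbits of coprime periods, and invoke Bowen's theorem to force the cyclic decomposition of $K$ to be trivial. This can be made to work, but the step you yourself flag as ``delicate'' is precisely the mathematical content: Smale's theorem as stated only yields a hyperbolic set for some iterate $f^N$ containing $p$ and one homoclinic point, so you must (i) chain together several applications of Smale's theorem and the inclination lemma to absorb $u$, $v$ and the chosen periodic orbits into a single set, (ii) check that the $f$-saturation of the resulting $f^N$-horseshoe is transitive and locally maximal for $f$ so that Bowen's theorem applies, and (iii) check that $K\subset h(p)$ (via Lemma~\ref{l.pointwise} and density of the periodic points of $K$), so that the intersections produced by mixing on $K$ actually witness mixing on $h(p)$. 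None of this is carried out, and it is considerably heavier machinery than the paper's two-line use of the $\lambda$-lemma together with Proposition~\ref{p.intersection}. I would not call your outline wrong, but as written the proof of the only substantive item rests on an asserted folklore construction; either supply it or switch to the direct inclination-lemma argument.
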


\begin{proof}
Let $m,n,k$ be three integers. We claim that the closure of $W^u(f^k(p))\fork W^s(f^m(p))$
is either empty or coincides with
$f^{m+n\ell}(h(p))$.
Indeed the first set coincides with the image by $f^{m+n.\ell}$ of
the closure of $W^u(f^{k-m-n.\ell}(p))\fork W^s(f^{-n.\ell}(p))$. If this set is non-empty, one deduces that
$k-m-n.\ell$ belongs to $\ell.\ZZ$, hence $W^u(f^{k-m-n.\ell}(p))$ and $W^u(p)$ accumulate on each other.
Similarly, $W^s(f^{-n.\ell}(p))$ and $W^s(p)$ accumulate on each other.
Consequently the closure of $W^u(f^{k-m-n.\ell}(p))\fork W^s(f^{-n.\ell}(p))$ coincides with $h(p)$,
proving the claim.

The claim immediately implies that $H(O)$ coincides with the union of the iterates of $h(p)$
and that $f^\ell(h(p))$ coincides with $h(p)$.
Hence the two first items hold.

Let $U,V\subset M$ be two open sets which intersect $h(p)$. We have to show that for any
large $n$, the intersection $f^{n.\ell}(U)\cap V$ intersects $h(p)$.
We first introduce two points $x\in U\cap (W^u(p)\fork W^s(p))$ and $y\in V\cap (W^u(p)\fork W^s(p))$.
Let us consider a disc $D\subset W^u(p)\cap U$ containing $x$.
The inclination lemma shows that for $n$ large $f^{n.\ell}(D)$ accumulates on any 
disc of $W^u(p)$, and hence on the local unstable manifold of $y$.
As a consequence, for $n$ large $f^{n.\ell}$ intersects transversally in $V$ the local
stable manifold of $y$, which proves the third item in the statement. 

Let $A_k$ denote the interior of $f^k(h(p))$ in $H(O)$:
it is non-empty and dense in $f^k(h(p))$.
The open and dense subset $A_0\cup\dots\cup A_{\ell-1}$ of $H(O)$
is the disjoint union of elements of the form $A_{k_1}\cap A_{k_2}\cap\dots\cap A_{k_s}$.
By construction this partition is invariant by $f$.
Since the restriction of $f^\ell$ to each set $A_k$ is topologically mixing,
one deduces that $A_k$ is not subdivided by the partition. This means that
either $A_j=A_k$ or $A_j\cap A_k=\emptyset$. In the latter case one gets $f^j(h(p))=f^k(h(p))$
proving the last item.
\end{proof}

\paragraph{\bf 2.4 \quad Perturbation of the period.}
For any diffeomorphism $g$ close to $f$, one can consider the hyperbolic continuation
$O_g$ of $O$. By the implicit function theorem a given transverse intersection between $W^s(O)$ and $W^u(O)$
will persist, and hence the period of the homoclinic class of $O_g$ depends upper-semi-continously with $g$.
The following perturbation lemma provides a mechanism for the non-continuity of the period.

\begin{proposition}\label{p.cycle}
Let us consider $f\in \diff^r(M)$, for some $r\geq 1$,
and two hyperbolic periodic orbits $O, O'$ having a \emph{cycle}:
$W^u(O)\cap W^s(O')\neq \emptyset$ and $W^u(O')\cap W^s(O)\neq \emptyset$.

If the period of the orbit $O'$ does not belong to the set of periods $\ell(O).\ZZ$ of the class $H(O)$,
then there exists a diffeomorphism $g$ that is arbitrarily $C^r$-close to $f$
such that $\ell(O_g)<\ell(O)$.
\end{proposition}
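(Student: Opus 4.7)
The strategy is to use the cycle to make $O_g$ and $O'_g$ homoclinically related via an arbitrarily small $C^r$-perturbation $g$ of $f$. Once this is achieved, the period of $O'_g$ lies in the set of periods $\ell(O_g).\ZZ$; combined with the hypothesis that this period does not lie in $\ell(O).\ZZ$ and the upper semi-continuity of $\ell$ recalled just above the statement, this will force $\ell(O_g)<\ell(O)$.

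The first step is to produce transverse heteroclinic intersections out of the cycle. Pick points $x\in W^u(O)\cap W^s(O')$ and $y\in W^u(O')\cap W^s(O)$. Since $O\neq O'$, the $\alpha$- and $\omega$-limits of $x$ and $y$ sit on different orbits, so the full orbits of $x$ and $y$ are disjoint from each other and from $O\cup O'$. Choose small, pairwise disjoint open boxes $B_x\ni x$ and $B_y\ni y$ disjoint from $O\cup O'$. Applying the elementary perturbation lemma in a small ball contained in $B_x$ and supported away from $O\cup O'\cup\{y\}$, one tilts the local piece of $W^u(O)$ reaching $x$ so that $T_xW^u(O_g)+T_xW^s(O'_g)=T_xM$; this produces a transverse heteroclinic point near $x$. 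A symmetric perturbation supported in $B_y$ yields in addition a transverse point in $W^u(O'_g)\fork W^s(O_g)$. The two perturbations have disjoint supports, none of which meet $O\cup O'$, so $O_g=O$ and $O'_g=O'$ set-theoretically (with the same periods), and the combined diffeomorphism $g$ can be taken $C^r$-close to $f$ while realizing both transverse heteroclinic intersections simultaneously.

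For such $g$ the orbits $O_g$ and $O'_g$ are homoclinically related, hence the period of $O'$ belongs to $\ell(O_g).\ZZ$; that is, $\ell(O_g)$ divides the period of $O'$. The hypothesis that $\ell(O)$ does not divide the period of $O'$ rules out $\ell(O_g)=\ell(O)$, and the upper semi-continuity of $\ell$ gives $\ell(O_g)\leq\ell(O)$ provided $g$ is close enough to $f$ in the $C^1$-topology; together these force $\ell(O_g)<\ell(O)$, as required. The delicate point is the first step: producing transversality at both heteroclinic points through a single small $C^r$-perturbation, without moving the periodic orbits $O,O'$ or destroying either heteroclinic connection. Localizing the two perturbations in disjoint small boxes away from $O\cup O'$ and invoking the elementary perturbation lemma in each is what makes the construction go through.
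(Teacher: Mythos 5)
There is a genuine gap, and it is in the very first step of your plan. You propose to perturb $f$ so that $O_g$ and $O'_g$ become homoclinically related, i.e.\ so that \emph{both} heteroclinic intersections of the cycle become transverse. But the proposition makes no assumption that $O$ and $O'$ have the same stable dimension. If, say, $\dim W^s(O)<\dim W^s(O')$, then
$\dim W^u(O')+\dim W^s(O)=\dim M-\dim W^s(O')+\dim W^s(O)<\dim M$,
so no point of $W^u(O'_g)\cap W^s(O_g)$ can ever satisfy $T_zW^u(O'_g)+T_zW^s(O_g)=T_zM$: a transverse intersection of submanifolds whose dimensions sum to less than $\dim M$ is empty. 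Hence two hyperbolic orbits of different indices are never homoclinically related, for any diffeomorphism, and no perturbation can realize your first step. This is not a degenerate corner case: in the application in Section~4, $O'$ is an arbitrary hyperbolic periodic orbit of $\Lambda$ whose period is not a multiple of $\ell$, with no control on its index. (A secondary issue, even in the equal-index case: the elementary perturbation lemma only moves finitely many points, so ``tilting'' $W^u(O)$ to make a prescribed tangency transverse without destroying the intersection requires an additional argument; the paper instead \emph{creates} a transverse intersection by pushing a point of one manifold onto a point of the other, after arranging, via the inclination lemma, that the relevant semi-orbits avoid the support of the perturbation.)

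The paper's proof never tries to homoclinically relate $O$ and $O'$. It uses the cycle only as scaffolding: one makes the intersection $y\in W^u(f^n(p))\cap W^s(q)$ robust (possible on that side of the cycle, where the dimensions sum to at least $\dim M$), then uses the inclination lemma to find $x_u\in W^u(f^n(p))$ and $x_s\in W^s(p)$ near the other cycle point $x$ with controlled semi-orbits, and pushes $x_u$ onto $x_s$. This creates a transverse intersection $W^u(f^n(p))\fork W^s(p)$ --- a \emph{homoclinic} intersection of $O$ with itself at time shift $n$, between manifolds of complementary dimensions, so transversality is available whatever the index of $O'$. By the characterization in Proposition~\ref{p.intersection}, this forces $n$ (or $n+r$, with $r$ the period of $O'$, chosen so as not to lie in $\ell(O).\ZZ$) into the set of periods of $H(O_g)$, whence $\ell(O_g)<\ell(O)$. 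Your closing step --- combining the new period with upper semi-continuity of $g\mapsto\ell(O_g)$ --- is correct as far as it goes; it is the mechanism for producing a new period that must be replaced.
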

\begin{proof}
Let us assume that the stable dimension of $O_1$ is smaller than or equal to that of $O_2$.
Let us choose $p\in O$ and $q\in O'$ so that $W^s(p)$ and $W^u(q)$
have an intersection point $x$ and for some $n\in \ZZ$
the manifolds $W^u(f^n(p))$ and $W^s(q)$
have an intersection point $y$.
One can perturb $f$ in an arbitrarily small neighborhood of
$y$ so that the intersection becomes quasi-transversal
(i.e. $T_yW^u(O)+T_yW^s(O')=T_yM$), hence robust, and we have
not modified the orbits of $p,q,x$.

The inclination lemma ensures that
$W^u(f^n(p))$ accumulates on $W^u(q)$.
This shows that if one fixes a small neighborhood $U$ of $x$,
there exist $x_s\in W^s(p)$ and $x_u\in W^u(f^n(p))$ arbitrarily close to
$x$ whose respective future and past semiorbits avoid $U$.
By a small $C^r$-perturbation it is thus possible to create
a transverse intersection between $W^s(p)$ and $W^u(f^n(p))$ so that after the perturbation
$n$ belongs to the set of periods of the homoclinic class of $O$.

If $n\not\in\ell(O).\ZZ$ we are done.
Otherwise, if $r$ is the period of $O'$ then $W^u(f^{n+r}(p))\cap W^s(q)\neq \emptyset$.
One can thus repeat the same construction replacing $n$ by $n+r\not\in\ell(O)$ and obtain the conclusion.
\end{proof}

\paragraph{\bf 2.5 \quad Relative homoclinic classes.}
If $O$ is contained in an open set $U$, one defines the \emph{relative homoclinic class}
$H(O,U)$ of $O$ in $U$ as the closure of
$W^s(O)\fork W^u(O)\cap (\bigcap_{n\in\ZZ}f^n(U))$. It is a transitive invariant compact set contained in $\overline U$.

All the results stated in the previous sections remain valid if one considers
hyperbolic periodic orbits and transverse homoclinic/heteroclinic orbits in $U$.
For instance, two hyperbolic periodic orbits
$O_1,O_2\subset U$ are \emph{homoclinically related in $U$}
if both $W^s(O_1)\fork W^u(O_2)$ and $W^s(O_2)\fork W^u(O_1)$
meet $\bigcap_{n\in \ZZ}f^n(U)$.
The homoclinic class $H(O,U)$ coincides with the closure of the set
of hyperbolic periodic points whose orbit is homoclinically related to $O$ in $U$.

The relative pointwise homoclinic class $h(p,U)$
is the intersection $h(p)\cap H(O,U)$.

\section{A closing lemma with time control}\label{s.closing}
Pugh's closing lemma~\cite{pugh} allows one to turn any non-wandering point into a periodic point via a small
$C^1$-per\-tur\-ba\-tion of the dynamics.
The proof selects a segment of orbit of the original diffeomorphism which will be closed,
so that it is difficult to control the period of the obtained orbit.
In order to control the period of the closed orbit we propose here a different argument which uses several orbit segments of the original dynamics,
as in the proof of Hayashi's connecting lemma.
A technical condition appears on the periodic points.

\begin{definition}
A periodic point $x$ is \emph{non-resonant} if, for
the tangent map $D_xf^r$ at the period,
the eigenvalues having modulus equal to one are simple
(i.e. their characteristic spaces are one-dimensional)
and do not satisfy relations of the form
$$\lambda_1^{k_1}\lambda_2^{k_2}\dots\lambda_s^{k_s}=1,$$
where the numbers $\lambda_1,\overline{\lambda_1},\dots,
\lambda_s,\overline{\lambda_s}$ are distinct and the $k_i$ are positive integers.
\end{definition}

This is obviously satisfied by hyperbolic periodic points.
Also this condition is generic in $\diff^1(M)$ and in $\diff^1_\omega(M)$, see~\cite{kupka,robinson,smale-kupka}.
The statement of the closing lemma with time control is the following.
\begin{theorem}[\bf Closing lemma with time control]\label{t.closing}
Let $f$ be a $C^1$-diffeomorphism, $\ell\geq 2$ be an integer, $x$ be
either a non-periodic point or
a non-resonant periodic point.
Assume that each neighborhood $V$ of $x$ intersects some iterate $f^n(V)$ such that $n$ is not a multiple of $\ell$.
Then, for diffeomorphisms $g$ arbitrarily $C^1$-close to $f$,
$x$ is periodic and its period is not a multiple of $\ell$.
\smallskip

If moreover there exists an open set $U$ such that each small neighborhood
$V$ of $x$ has a forward iterate $f^n(V)$ which intersects $V$ and such that
$f(V),\dots, f^{n-1}(V)$ are contained in $U$, then the orbit of $x$ under $g$ can be chosen in $U$.
If $f$ belongs to $\diff^1_\omega(M)$, so does $g$.
\end{theorem}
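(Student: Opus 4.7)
The plan is to close an orbit segment whose length is not a multiple of $\ell$, reducing the period-control to the classical Pugh--Hayashi closing machinery via one small arithmetic observation. Namely, if one produces a $C^1$-close perturbation $g$ of $f$ together with an integer $n$ satisfying $\ell\nmid n$ and $g^n(x)=x$, then the genuine period $m$ of $x$ under $g$ divides $n$, and $\ell\mid m$ would force $\ell\mid n$. It therefore suffices to produce \emph{any} return time $n$ with $\ell\nmid n$, regardless of whether it is the minimal period.

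By the main hypothesis, fix an arbitrarily small neighborhood $V$ of $x$, an integer $n\geq 1$ with $\ell\nmid n$, and a point $y\in V$ with $f^n(y)\in V$; the locality hypothesis additionally gives $f(y),\dots,f^{n-1}(y)\in U$. When $x$ is non-periodic, the orbit $x,\dots,f^{n-1}(x)$ is injective, so for $V$ small enough one places pairwise disjoint Pugh boxes around $y,f(y),\dots,f^{n-1}(y)$ that avoid the remainder of the forward orbit of $x$. The classical Pugh closing argument, splitting a coarse displacement of order $\mathrm{dist}(y,f^n(y))$ across many boxes in the spirit of Hayashi's connecting lemma, produces $g$ that is $C^1$-close to $f$, equals $f$ outside the tower, and satisfies $g^n(y)=y$; an additional tiny perturbation in a box at $x$ sending $x$ to $y$ then yields $g^n(x)=x$.

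The genuinely delicate case is when $x$ has period $r$ for $f$ and is non-resonant. Then the orbit segment of $x$ of length $n$ may loop back on itself, and the Pugh boxes around the return segment of $y$ cannot in general be made disjoint from it. The non-resonance hypothesis is used exactly as in the proof of the Kupka--Smale theorem: the freedom it grants to perturb $D_xf^r$ along its modulus-one eigenspace allows one first to produce, by a $C^1$-small perturbation supported near $x$, nearby returning orbits at time $n$ that avoid the original periodic orbit of $x$ at a prescribed scale. Once this disjointness is achieved, the tower argument of the non-periodic case applies verbatim and delivers $g^n(x)=x$ with $\ell\nmid n$.

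The locality claim is immediate because every perturbation above is supported inside a thin tower around $y,f(y),\dots,f^{n-1}(y)\subset U$ together with a small box at $x$ of radius shrinking with $V$. In the conservative case, one replaces the ingredients by the Pugh--Robinson volume-preserving closing lemma and the Arnaud conservative connecting lemma, which supply $\omega$-preserving versions of each perturbation, so that the entire argument stays inside $\diff^1_\omega(M)$. The main technical hurdle is the non-resonant periodic case: altering the combinatorial period of an already-periodic point requires the non-resonance assumption in an essential way and is beyond the scope of Pugh's original lemma.
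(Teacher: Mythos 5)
There is a genuine gap, and it sits exactly where the real difficulty of this theorem lies. Your argument rests on the claim that ``the classical Pugh closing argument \dots produces $g$ \dots and satisfies $g^n(y)=y$'' for the \emph{given} return time $n$. This is false in the $C^1$ topology: an elementary $C^1$-perturbation can only move a point by a fraction $\theta$ of the radius of the ball supporting it, so to absorb a displacement of order $\mathrm{dist}(y,f^n(y))$ (comparable to the size of $V$) one must spread the perturbation over many disjoint balls, and the only way to do this is to use \emph{intermediate returns} of the orbit to $V$ (Pugh's selection of a sub-segment) or, in the Hayashi/Bonatti--Crovisier formulation, to perform shortcuts on a pseudo-orbit with jumps. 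In either case the closing time of the resulting periodic orbit is not $n$ but some other, a priori uncontrolled, integer (a difference $n_j-n_i$ of return times, or the length of the pseudo-orbit after many shortcuts). Your opening arithmetic remark ($\ell\nmid n$ and $m\mid n$ imply $\ell\nmid m$) is correct but does not help, because you never actually obtain a closing at a time $n$ with $\ell\nmid n$. The paper's entire contribution in Section 3 is to supply the missing control: in the shortcut process, a periodic pseudo-orbit of length $n$ is split into two periodic pseudo-orbits whose lengths sum to $n$, so at least one of them has length not a multiple of $\ell$; Proposition 3.3 shows one can always keep that one while still making the balls disjoint, which is what makes the elementary perturbation lemma applicable with the arithmetic constraint preserved.

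The periodic case in your write-up has the same underlying problem and, in addition, the appeal to non-resonance ``as in Kupka--Smale'' to push returning orbits off the periodic orbit is not a proof. The paper instead uses the notion of a \emph{circumventable} periodic orbit from~\cite{abc}: the returning segment is decomposed into a piece that stays near $O$ (whose length is automatically a multiple of the period $r$, hence of $\ell$, since one may assume $\ell\mid r$) and a piece away from $O$ that is rerouted through a tiled domain disjoint from $O$; the lengths are then tracked modulo $\ell$ before feeding the resulting pseudo-orbit into the same shortcut proposition. Without an argument of this kind, your proof establishes nothing beyond the classical closing lemma, which gives a periodic point but no information on its period modulo $\ell$.
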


\subsection{Pugh's algebraic lemma and tiled perturbation domains}\label{ss.pugh}
The main connexion results for the $C^1$-topology~\cite{pugh,pugh-robinson,hayashi,bc,abc,approximation}
are obtained by using the two following tools.
The first one allows to perform independent elementary perturbations.
They are usually obtained through
Pugh's ``algebraic" lemma (the name refers to the proof which only involves sequences of linear
maps) and with combinatorial arguments.

\begin{elementary}
For any neighborhood $\cV$
of the identity in $\diff^1(M)$ (or in $\diff_\omega^1(M)$),
there exists $\theta\in (0,1)$ and $\delta>0$ such that for any finite collection
of disjoint balls $B_i=B(x_i,r_i)$, with $r_i<\delta$,
and for any collection of points $y_i\in B(x_i,\theta.r_i)$, there exists
a diffeomorphism $h\in \cV$ supported on the union of the $B_i$
which satisfies $h(x_i)=y_i$ for each $i$.
\end{elementary}

Let $d$ be the dimension of $M$.
A \emph{cube} $C$ of $\RR^d$ is the image of the standard cube
$[-1,1]^d$ by a translation and an homothety. For $\lambda>0$ we denote $\lambda.C$
the cube having the same barycenter and whose edges have a length equal to $\lambda$ times
those of $C$. A cube $C$ of a chart $\varphi\colon V\to \RR^d$ of $M$ is the preimage by $\varphi$
of a cube $C'$ of $\RR^d$. The cube $\lambda.C$ is the preimage $\varphi^{-1}(\lambda.C')$.

\begin{algebraic}
For any $f\in \diff^1(M)$ and any $\eta\in (0,1)$,
there exists $N\geq 1$ and a covering of $M$ by charts $\varphi\colon V\to \RR^d$
whose cubes $C$ have the following property.

For any $a,b\in C$, there is a \emph{connecting sequence} $(a=a_0,a_1,\dots,a_N=b)$ such that for each $0\leq k\leq N-1$ the
point $a_{k}$ belongs to $f^k(5/4.C)$ and the distance $d(a_k, f^{-1}(a_{k+1}))$
is smaller than $\eta$ times the distance between $f^k(5/4.C)$ and
the complement of $f^k(3/2.C)$.
\end{algebraic}
\medskip

In the following, one will fix a $C^1$-diffeomorphism $f$, a neighborhood $\cU\subset \diff^1(M)$, and:
\begin{itemize}
\item[--] some constants $\theta,\delta$ provided by the elementary perturbation lemma and associated to the neighborhood $\cV=\{h=f^{-1}\circ g, g\in \cU\}$ of the identity;
\item[--] an integer $N\geq 1$ and a finite collection of charts $\{\varphi_s\colon V_s\to \RR^d\}_{s\in S}$ given by Pugh's algebraic lemma and associated to the constant $\eta=(\theta/4)^{4^d}$.
\end{itemize}

\begin{definition}
The collection of charts $\{\varphi_s\}_{s\in S}$ is a \emph{tiled perturbation domain} if we have:
\begin{itemize}
\item[--] the $f^k(V_s)$, with $s\in S$, $0\leq k<N-1$,
have diameter $<\delta$
and are pairwise disjoint;
\item[--] each set $V_k$ is tiled, i.e. is the union of cubes (the \emph{tiles}) with pairwise disjoint interior satisfying:
each tile $C$ intersects (is \emph{adjacent to}) at most $4^d$ other tiles, each of them
having a diameter which differs from the diameter of $C$ by a factor in $[1/2,2]$.
\end{itemize}
\end{definition}
Any point distinct from its $N-1$-first iterates belongs to a tiled
domain, see figure 1 in~\cite{bc}.
Note also that if the interior of $3/2.C$ and $3/2.C'$ intersect, then the tiles $C,C'$
are adjacent.

\subsection{The orbit selection}
The perturbation domain is used to connect together a collection of segments of orbits of $f$.
\begin{definition}
A \emph{pseudo-orbit with jumps in the perturbation domain} is a sequence $(y_i)$ such that
for each $i$, either $f(y_{i})=y_{i+1}$ or the points $y_i$, $f^{-1}(y_{i+1})$ are contained in a same set $V_{s}$.
\end{definition}
\medskip

We are interested by the following additional properties:
\begin{itemize}
\item[1)] When the $y_i$, $f^{-1}(y_{i+1})$ belong to $V_s$, there is
a connecting sequence $(a_{i,0},\dots,a_{i,N})$ with $a_{i,0}=y_i$
and $a_{i,N}=f^{N-1}(y_{i+1})$ such that for each $0\leq k<N$, the ball $B_{i,k}=B(a_{i,k},r_{i,k})$
with $r_{i,k}=\theta^{-1}.d(a_{i,k},f^{-1}(a_{i,k+1}))$ is contained in $f^k(V_s)$.
\item[2)] The balls $B_{i,k}$ are pairwise disjoint.
\end{itemize}
In order to control the periodic pseudo-orbits,
we also introduce an integer $\ell\geq 1$.
\begin{itemize}
\item[3)] The length of the periodic pseudo-orbit
$(y_1,\dots,y_n=y_0)$ is not a multiple of $\ell$.
\end{itemize}
\medskip

When a pseudo-orbit $(y_1,\dots,y_n=y_0)$ satisfies conditions 1), 2)
and $f(y_0)\neq y_1$,
one can apply the elementary perturbation lemma and
build a diffeomorphism $g\in \cU$ by perturbing $f$ in the union of the balls $B_{i,k}$
such that the point $y_0$ belongs to a periodic orbit of length $n$.
\medskip

These conditions can be obtained by the following proposition.

\begin{proposition}\label{p.selection}
Let $(y_i)$ be a periodic pseudo-orbit with jumps in the perturbation domain
such that when $y_{i}$, $f^{-1}(y_{i+1})$ differ, they are contained in a same tile of the perturbation domain.

Then there exists another periodic pseudo-orbit with jumps in the perturbation domain
which satisfies 1) and 2). Moreover if the first pseudo-orbit satisfies 3),
then so does the second one.
\end{proposition}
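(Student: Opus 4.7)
My plan is to iteratively shorten the given periodic pseudo-orbit, always preserving property 3) and using each step to eliminate an overlap between balls, until properties 1) and 2) also hold. To initialize, I associate to each jump of the original pseudo-orbit a connecting sequence and balls via Pugh's algebraic lemma applied inside the tile $C_i\subset V_{s_i}$ that contains the jump; this immediately gives property 1) throughout the procedure. Because the sets $f^k(V_s)$ are pairwise disjoint across $(k,s)$ by the definition of the tiled perturbation domain, two balls $B_{i,k},B_{j,k'}$ from distinct jumps can meet only when $k=k'$ and the two jumps lie in the same $V_s$. The tile size control (adjacent tiles have diameters in ratio $[1/2,2]$, each tile has at most $4^d$ neighbours), combined with the smallness $\eta=(\theta/4)^{4^d}$, forces two intersecting balls $B_{i,k_0},B_{j,k_0}$ to come from tiles $C_i,C_j$ that are equal or adjacent.

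The crossover operation splits the pseudo-orbit at such an overlap. Let $i<j$ (cyclically) index the two jumps with $B_{i,k_0}\cap B_{j,k_0}\neq\emptyset$. I produce two new periodic pseudo-orbits with jumps in the perturbation domain:
\[
\gamma_A:\ y_{j+1},\dots,y_i \ \text{then jump to}\ y_{j+1},
\qquad
\gamma_B:\ y_{i+1},\dots,y_j \ \text{then jump to}\ y_{i+1}.
\]
All old jumps in the kept portion of each pseudo-orbit retain their original connecting sequences and balls; only the single new jump at position $i$ in $\gamma_A$ (respectively position $j$ in $\gamma_B$) needs a fresh connecting sequence. I build the one in $\gamma_A$ by concatenation,
\[
(a_{i,0},\dots,a_{i,k_0},a_{j,k_0+1},\dots,a_{j,N}),
\]
and symmetrically for $\gamma_B$. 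All transitions of this concatenation re-use the old balls, except the single transition at step $k_0$, whose new ball has radius bounded by $\theta^{-1}\bigl(r_{i,k_0}+(1+\theta)r_{j,k_0}\bigr)$ thanks to the overlap. The cushion built into $\eta=(\theta/4)^{4^d}$ is precisely tuned so that this inflated ball still lies inside $f^{k_0}(V_{s_i})$, keeping property 1) intact, and both $y_i,f^{-1}(y_{j+1})$ lie in the same $V_{s_i}=V_{s_j}$ so the new jump lives in the perturbation domain.

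A direct count gives $|\gamma_A|+|\gamma_B|=n$ for $n$ the current length. Hence if $n\notin\ell\ZZ$, at least one of $|\gamma_A|,|\gamma_B|$ also lies outside $\ell\ZZ$, and I retain that one. Since $|\gamma_A|,|\gamma_B|<n$ the total length strictly decreases, so the procedure terminates after finitely many steps. At termination no two balls meet, so the final pseudo-orbit satisfies 1), 2), and 3).

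The main technical obstacle I foresee is the ball-size bookkeeping around the crossover: after the merger the new ball at step $k_0$ is larger than those it replaces by a bounded factor, but property 1) requires it to still fit inside $f^{k_0}(V_{s_i})$, and successive crossovers must not exhaust the cushion. The exponent $4^d$ in the choice $\eta=(\theta/4)^{4^d}$ is tailored exactly for this, corresponding to the worst-case length of a chain of adjacent tiles across which ball-size ratios can accumulate by factors of $2$; making this estimate precise — and verifying that the kept portion of the pseudo-orbit inherits a still-tiled connecting structure after the concatenation — is the technical heart of the proof.
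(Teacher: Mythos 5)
Your overall strategy is the same as the paper's: perform shortcuts (your ``crossover'') at overlapping balls, concatenate the two connecting sequences at the overlap step, keep whichever of the two resulting cycles has length outside $\ell\ZZ$, and control the inflated ball by an a priori estimate tied to the choice $\eta=(\theta/4)^{4^d}$. Your radius bound $\theta^{-1}\bigl(r_{i,k_0}+(1+\theta)r_{j,k_0}\bigr)$ for the merged ball is correct (and slightly sharper than the paper's $2\theta^{-1}(r_{i,k}+r_{j,k})$), and the splitting/parity argument for property 3) is right.

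However, there is a genuine gap exactly at the point you defer as ``the technical heart'': your procedure, as described, cannot deliver the a priori bound. You build connecting sequences for \emph{all} jumps of the original pseudo-orbit and then only shorten at ball overlaps. But the bound that makes $\eta=(\theta/4)^{4^d}$ suffice is not about ``chains of adjacent tiles'': it is that a given ball centered at $a_{i,k}\subset f^k(3/2.C_i)$ can be merged at most $4^d$ times because each merge absorbs a ball coming from a tile adjacent to $C_i$, and \emph{each such tile carries at most one jump}. That last property is false for the initial pseudo-orbit (a single tile, equal to or adjacent to $C_i$, may be visited by arbitrarily many $y_j$'s, so the ball at $a_{i,k}$ could undergo arbitrarily many merges and the geometric growth factor per merge would exhaust the cushion). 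The paper secures it by a preliminary round of \emph{primary shortcuts}, performed before any connecting sequences or balls are introduced: whenever two points $y_i,y_j$ of the pseudo-orbit lie in the same tile, one shortcuts immediately (again choosing the piece whose length is not a multiple of $\ell$), until the pseudo-orbit meets each tile at most once. Only then are the connecting sequences and balls constructed, and only then does the secondary (ball-overlap) shortcut phase begin, with the count of at most $4^d$ merges per ball now valid. Without this reduction your inductive maintenance of property 1) breaks down; with it, your argument matches the paper's. (A cosmetic point: one also perturbs the pseudo-orbit off the tile boundaries first so that each point has a well-defined tile.)
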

\begin{proof}
Note that by an arbitrarily small modification of the initial pseudo-orbit,
the points of the pseudo-orbit do not belong to the boundaries of the tiles.
In this way, to each point of the pseudo-orbit which belong
to the perturbation domain is associated a unique tile.

\paragraph{\rm \emph{The shortcut process.}} The new orbit is obtained from the first one by performing successive shortcuts:
if $(y_1,\dots,y_n)$ is a first periodic pseudo-orbit with jumps in the perturbation domain
and if $y_i,y_j$ for some $i<j$ belong to a same set $V_k$, then $(y_1,\dots,y_{i},y_{j-1},\dots,y_n)$ and $(y_{i+1},\dots,y_{j})$ are two new
periodic pseudo-orbits with jumps in the perturbation domain.
In the process, we keep one of them and continue with further shortcuts.
Note that if the initial orbit satisfies 3), i.e. if $n$ is not a multiple
of $\ell$, then the periods of the two new orbits cannot be both multiple of $\ell$:
we can thus choose a new orbit which still satisfies 3).

\paragraph{\rm \emph{Primary shortcuts avoiding accumulations in tiles.}} In a first step, we perform shortcuts
so that the new periodic pseudo-orbit still has jumps in the tiles of
the perturbation domain, but intersect each tile at most once:
we perform a shortcut each time we have a pair $y_i,y_j$ in a same tile of the perturbation domain.

\paragraph{\rm \emph{Construction of connecting sequences.}}
We then consider each jump of the obtained periodic pseudo-orbit
$(y_1,\dots,y_n)$ at the end of the first step: these are the indices $i$ such that
$y_{i}$ is different from $f^{-1}(y_{i+1})$. By definition the two points belong
to a same tile $C_i$ of a domain $V_s$. One can thus use the property given by Pugh's algebraic lemma and build a connecting sequence $(a_{i,0},\dots,a_{i,N})$ with $a_{i,0}=y_{i}$, $x_{i,N}=f^{N-1}(y_{i+1})$, such that for each $0\leq k\leq N-1$,
the distance $d_{i,k}=d(a_{i,k},f^{-1}(a_{i,k+1}))$ is smaller than $\eta$ times the distance
between $f^k(5/4.C_i)$ and the complement of $f^k(3/2.C_i)$.
We then set $r_{i,k}=\theta^{-1}.d_{i,k}$ and introduce the ball
$B_{i,k}=B(a_{i,k},r_{i,k})\subset f^k(V_s)$. By construction the condition 1) is satisfied, but the different balls $B_{i,k}$ may have non-empty intersection
when $i$ varies.

\paragraph{\rm \emph{Secondary shortcuts avoiding ball intersections.}}
Let us now consider the case where two balls $B_{i,k},B_{j,k'}$ intersect.
Note that this has to occur in some domain $f^k(V_s)$ for a given $s\in S$, hence we have $k=k'$.
We then perform the shortcut associated to the pair $y_i,y_j$.
Let us assume for instance that one keeps the orbit $(y_1,\dots,y_{i},y_{j+1},\dots,y_n)$
(the other case is similar). As a new connecting sequence between $y_{i}$ and $f^{N-1}(y_j)$
one introduces
$$(a'_{i,0},\dots,a'_{i,N})= (a_{i,0},\dots,a_{i,k},a_{j,k+1},\dots a_{j,N}).$$
In this way all the balls associated to the new sequence but one coincide with balls
of the former sequences. Only the new ball $B'_{i,k}$ is different: it has the same center as
as $B_{i,k}$ but a larger radius $r'_{i,k}$. Since the distance between $x_{i,k}$
and $f^{-1}(x_{j,k+1})$ is smaller than $2(r_{i,k}+r_{j,k})$, we have
\begin{equation}\label{e.radius}
r'_{i,k}\leq 2\theta^{-1}. (r_{i,k}+r_{j,k}).
\end{equation}

\paragraph{\rm \emph{The process stops.}}
Since the initial length of the pseudo-orbit is finite, the process necessarily stops
in finite time. We have however to explain why along the secondary shortcut procedure each ball
$B_{i,k}$ does not increase too much and does not leave the sets $f^k(V_s)$.

By construction it is centered at a point $a_{i,k}$ associated to a tile $C_i$.
Let us assume that the radius $r_{i,j}$ is a priori bounded by the distance between $f^k(5/4.C_i)$
and the complement of $f^k(3/2.C_i)$.
Since $a_{i,k}\in 5/4.C_i$, the ball $B_{i,k}$ is contained in $3/2.C_i$
and can only intersect the cubes $3/2.C$ such that $C$ and $C_i$ are adjacent tiles.
If $B_{i,k}$ intersects $B_{j,k}$, the point $a_{j,k+1}$
is thus associated to a tile adjacent to $C_i$.

Provided the a priori bound is preserved, the balls centered at $a_{i,k}$
during the process can thus intersect successively at most $4^d$ other balls
coming from adjacent tiles.
The diameter of the tiles adjacent to $C_i$ is at most twice the diameter of $C_i$, hence
from~\eqref{e.radius} after $4^d$ shortcuts, the diameter of the ball centered at $a_{i,k}$
is bounded from above by $(\theta/4)^{4^d}\eta$ times the distance between $f^k(5/4.C_i)$ and the complement of $f^k(3/2.C_i)$.
From our choice of $\eta$ this gives the a priori estimate.
\medskip

When the process stops, all the balls are disjoint, hence properties 1) and 2) are satisfied.
As we already explained, property 3) is preserved.
\end{proof}

\subsection{Proof of theorem~\ref{t.closing}}
Let us introduce as before an integer $N\geq 1$
and a chart $\varphi\colon V\to M$ of a neighborhood $V$ of $x$,
given by Pugh's algebraic lemma.

\paragraph{\bf The non-periodic case.}
Let us first assume that $x$ is non-periodic.
If $V$ is taken small enough, it is disjoint from
its $N-1$ first iterates. It can also be tiled, so that
it defines a perturbation domain and $x$ belongs to the interior of some tile $C$.

By assumption, there exists $z\in C$ and an iterate $f^n(z)\in C$
with $n\geq 1$ which is not a multiple of $\ell$:
the sequence $(z, f(z),\dots f^{n-1}(z))$ thus defines
a periodic pseudo-orbit which satisfies the property 3).
Applying proposition~\ref{p.selection}, there exists
a pseudo-orbit with jumps in the perturbation domain
which satisfies all the properties 1), 2) and 3).

One deduces that there exists a diffeomorphism $g$ in the neighborhood $\cU$ of $f$
having a periodic point in $V$ (close to $x$) whose period is not a multiple of $\ell$. By a new perturbation
(a conjugacy), one can ensure that this periodic point coincides with $x$, as required.
The proof is the same in $\diff^1_\omega(M)$.
When one gives an open set $U$ containing $\{x, f(x),\dots, f^{N-1}(x)\}$
and $\{z, f(z),\dots f^{n-1}(z)\}$, it also contains the obtained periodic orbit.

\paragraph{\bf The periodic case.}
When $x$ is periodic, it cannot belong to a tiled domain disjoint from a large number of iterates.
However from~\cite[proposition 4.2]{abc}, since $x$ is non-resonant, the orbit $O$ of $x$
satisfies the following property (see~\cite[definition 3.10]{abc}).
\begin{definition}\label{d.circumventable}
A periodic orbit $O$ is \emph{circumventable for $(\varphi, N)$} if there exists
\begin{itemize}
\item[--] some arbitrarily small neighborhoods $W^-\subset W^+$ of $O$,
\item[--] an open subset $V'\subset V$ which is a tiled domain of the chart $\varphi$,
\item[--] some families of compact sets $\cD^-,\cD^+$ contained in the interior
of the tiles of $V'$,
\end{itemize}
such that
\begin{itemize}
\item[--] any finite segment of orbit which connects $ W^-$ to
$M\setminus W^+$ (resp. which connects $M\setminus W^+$ to $W^-$) has a point in a compact set
of $\cD^-$ (resp. of $\cD^+$),
\item[--] for any compact sets $D^+\in \cD^+$, $D^-\in \cD^-$,
there exists a pseudo-orbit with jumps in the perturbation domain $V'$ which connects $D^+$ to $D^-$ and is contained in $W^+$.
\end{itemize}
\end{definition}
Note that one can assume that the period $r$ of $p$ is a multiple
of $\ell$ since otherwise the conclusion of theorem~\ref{t.closing} already holds.
One can consider as before $z\in V\cap W^-$ and an iterate $f^n(z)\in V\cap W^-$ with $n\geq 1$ which is not a multiple of $\ell$.
Let $f^{k^-}(z)$, $f^{k^+}(z)$
be the first and the last iterates $f^k(z)$ of $z$ with $0\leq k\leq n$ which belong to $V\setminus W^+$.
The integers $k^-$ and $n-k^+$ are multiples of $r$, and hence of $\ell$. As a consequence $k^+-k^-$ is not a multiple of $\ell$.
By definition~\ref{d.circumventable},
there exist also some iterates
$z^-,f^{m_1}(z^-)$ of $z$ which belong respectively to 
some compact sets $D^-\in \cD^-$ and $D^+\in \cD^+$ respectively
and such that $m_1\geq 1$ is not a multiple of $\ell$.
There also exist a pseudo-orbit
$(y_0,\dots,y_{m_2})$ contained in $W^+$, with jumps in the tiles of $V'$
and such that $y_0\in D^+$ and $y_{m_2}\in D^-$.
In particular, $m_2$ is a multiple of $r$, and hence of $\ell$.
One deduces that the pseudo-orbit
$(f(z^-),\dots,f^{m_1}(z^-),y_1,\dots,y_{m_2})$
has jumps in the tiles of the domain $V'$
and its length $m_2+m_1$ is not a multiple of $\ell$.

Applying proposition~\ref{p.selection}, there exists
a pseudo-orbit with jumps in the perturbation domain
which satisfies properties 1), 2) and 3).
One concludes as in the non-periodic case.

\section{Consequences}
We now give the proof of the theorem~\ref{t.main}, which implies theorems~\ref{t.transitive}
and~\ref{t.conservatif}. It combines the classical generic properties and a standard Baire argument.

\subsection{The non-conservative case}

There exists a dense $G_\delta$ subset $\cG\subset \diff^1(M)$ of diffeomorphisms $f$ which satisfy:
\begin{enumerate}
\item\label{kupka-smale1} \emph{All the periodic points are hyperbolic.}
\item\label{kupka-smale2}
\emph{Any intersection $x$ between the stable $W^s(O)$
and the unstable manifolds $W^u(O')$ of two hyperbolic periodic orbit is transverse, i.e. $T_xM=T_xW^s(O)+T_xW^u(O')$.}

These two items together form the Kupka-Smale property, see~\cite{kupka,smale-kupka}.

\item\label{homocline}
\emph{Any locally maximal chain-transitive set is a relative homoclinic class.}

\item\label{cycle} \emph{If two hyperbolic periodic orbits $O,O'$ are contained in a same chain-transitive set $\Lambda$,
then by an arbitrarily small $C^1$-perturbation there exists a cycle between $O$ and $O'$
which is contained in an arbitrarily small neighborhood of $\Lambda$.}

\item\label{related} \emph{Any two hyperbolic periodic orbit with the same stable dimension,
contained in a same chain-transitive set $\Lambda$, are homoclinically related in any neighborhood of $\Lambda$.}

The three last items are direct consequences of the connecting lemma
for pseudo-orbits~\cite{bc} (see also~\cite[theorem 6]{approximation} for a local version).

\item\label{semi-continuity} For any $\ell\geq 1$ and any open set $U$,
let $K_{\ell,U}(f)$ denote the closure of the set of periodic points
whose period is not a multiple of $\ell$ and whose orbit is contained in $U$.

\emph{If $g$ is $C^1$-close to $f$, then
$K_{\ell,U}(g)$ is contained in a small neighborhood of $K_{\ell,U}(f)$.}
\begin{proof}
When all the periodic orbits of $f$ are hyperbolic
(or more generally have no eigenvalue equal to $1$),
$f$ is a lower-semi-continuity point of the map
$_{\ell,U}\colon g\mapsto K_{\ell,U}(g)$ for the Hausdorff topology.
One deduces from Baire's theorem
that $K_{\ell,U}$ is continuous in restriction to a dense $G_\delta$
subset $\cG_0\subset \diff^1(M)$.
If $f\in \cG_0$ does not satisfy the item~\ref{semi-continuity},
then there exists a point $x\not\in K_{\ell,U}(f)$ which is
arbitrarily close to a periodic point $p$ of a diffeomorphism $g$
close to $f$ and whose period is not a multiple of $\ell$.
By a small perturbation, one can assume that the periodic point $p$
has no eigenvalue equal to $1$, and hence one can replace $g$ by any diffeomorphism close: taking $g\in \cG_0$, one contradicts the continuity of $K_{\ell,U}$ on $\cG_0$. This proves the property.
\end{proof}

\item\label{period-robust}
\emph{For any hyperbolic periodic orbit $O$,
any neighborhood $U$ of $O$ and any diffeomorphism
$g$ $C^1$-close to $f$,
the relative homoclinic class $H(O_g,U)$ of $g$ has the same periods as $H(O,U)$.}

Indeed we noticed in section~2.4 that the period map $g\mapsto \ell(O_g)$
is upper-semi-continuous, and hence is locally constant on an open and dense subset of $\diff^1(M)$.
\end{enumerate}
\bigskip

We now fix $f\in \cG$ and a locally maximal chain-transitive set $\Lambda=\cap_{i\in \ZZ}f^i(U)$
in an open set $U$.
By item~\ref{homocline}, $\Lambda$ is a relative homoclinic class $H(O,U)$.
Then by proposition~\ref{p.mixing},
the set $\Lambda$ admits an invariant decomposition into compact sets
$$\Lambda= \Lambda_1\cup \dots\cup \Lambda_\ell,$$
such that for each $i$, the restriction of $f^\ell$
to $\Lambda_i$ is topologically mixing,
$\Lambda_i$ coincides with the pointwise relative homoclinic
class of a point of $O$ in $U$, and $\ell$ is the period of the relative homoclinic class.

Let us assume by contradiction that $\Lambda_1$ and $\Lambda_i$ intersect for some $1< i\leq\ell$ at a point $x$.
If $x$ is periodic, then it is hyperbolic by item~\ref{kupka-smale1}.
Since $f^\ell$ is transitive in $\Lambda_1$, one deduces that
for any neighborhood $V$ of $x$ there exists $k\geq 0$ and
a segment of orbit $y,f(y),\dots, f^{k\ell+j}(y)$ in $U$ with endpoints in $V$. 
One can thus apply theorem~\ref{t.closing} and by a $C^1$-perturbation build a periodic point
arbitrarily close to $x$, whose period is not a multiple of $\ell$
and whose orbit is contained in $U$.
From the item~\ref{semi-continuity}, this shows that $K_{\ell,U}(f)$ contains $x$. Since $\Lambda$ is the locally maximal invariant set in $U$,
this shows that it contains a periodic orbit $O'$ whose period is not
a multiple of $\ell$ and which is hyperbolic by the item~\ref{kupka-smale1}.
From the item~\ref{cycle}, one can create by an arbitrarily small perturbation
a cycle between $O$ and $O'$. From item~\ref{period-robust} and proposition~\ref{p.cycle}
the period of $O'$ is contained in the set of periods
$\ell(O).\ZZ$, a contradiction. The sets $\Lambda_i$ are thus pairwise disjoint.

The uniqueness of the decomposition is easy:
considering any small open set $V$ intersecting $H(O,U)$,
then a large iterate $f^n(V)$ meets $V\cap H(O,U)$
if and only if $n$ is a multiple of $\ell$.
Moreover the closure of
$\bigcup_{k\geq k_0}f^{k\ell}(V\cap H(O,U))$, for $k_0$ large,
coincides with one of the sets $\Lambda_i$.
We have thus obtained the main conclusion of theorem~\ref{t.main}.

Let us consider two hyperbolic periodic points $p,q\in \Lambda_1$ having the same stable dimension. By item~\ref{related}, their orbits are homoclinically related in $U$.
The previous discussion shows that $\Lambda_1=h(p,U)=h(q,U)$
and $\ell$ is the minimal positive integer such that $(W^u(f^\ell(p))\fork W^s(p))\cap \Lambda$ is non-empty.
By item~\ref{kupka-smale2}, the intersections between $W^u(f^\ell(p))$
and $W^s(p)$ are all transverse, giving the first item of the theorem~\ref{t.main}.

There exists a transverse intersection point in $\Lambda$
between $W^u(p)$ and an iterate $W^s(f^k(q))$.
Using the fact
that the decomposition of the theorem is an invariant partition into disjoint compact sets,
one deduces that $f^k(q)$ belongs to $\Lambda_1$ and that $k$ is a multiple of $\ell$.
By proposition~\ref{p.intersection}, this implies that $(W^u(p)\fork W^s(q))\cap \Lambda$ is non-empty.
Lemma~\ref{l.pointwise} and item~\ref{kupka-smale2}
now show that $\Lambda_1=h(p)$ is the closure of
$(W^u(p)\fork W^s(q))\cap\Lambda=W^u(p)\cap W^s(q)\cap\Lambda$, proving the second item of the theorem.

\subsection{The conservative case}
When $\dim(M)\geq 3$ and $\omega$ is a volume form,
the previous proof goes through. In the other cases $\omega$
is a symplectic form and the item~\ref{kupka-smale1} may
fail. However the same proof can be done replacing
the item~\ref{kupka-smale1} by the properties 1' and 1'' below.
\bigskip

Any diffeomorphism in a dense $G_\delta$ subset $\cG_\omega\subset \diff^1_\omega(M)$ satisfies
the items~\ref{kupka-smale2}-\ref{period-robust} and moreover:
\begin{itemize}
\item[1']\label{kupka-smale}
\emph{All the periodic points are non-resonant.}

\item[1'']
\emph{Any neighborhood of a periodic orbit $O$
contains a hyperbolic periodic orbit $O'$. Consider $\ell\geq 1$.
If the period of $O$ is not a multiple of $\ell$,
then the same holds for $O'$.}
\end{itemize}
\begin{proof}[Proof]
By~\cite{robinson}, there exists a dense $G_\delta$ subset $\cG_\omega'$
of diffeomorphisms satisfying the item 1'.

One may then use similar arguments as in~\cite[proposition 3.1]{newhouse-symplectic}.
Consider any non-hyperbolic
periodic point $x$ of a diffeomorphism $f$,
with some period $r$. Using generating functions, it is possible
to build a diffeomorphism $\tilde f\in \diff^1_\omega(M)$ that is
$C^1$-close to $f$ such that the dynamics of $\tilde f^r$ in a neighborhood of $x$ is conjugated to a non-hyperbolic
linear symplectic map $A$
which is diagonalizable over $\mathbb{C}$.

Let $\lambda_1,\dots,\lambda_m$ be the eigenvalues
of $A$ with modulus one.
One can assume moreover that they have the form
$e^{2i\pi p_k/q_k}$ where $\ell\wedge q_k=1$.
One deduces that $x$ is the limit of periodic points $y$
whose minimal period is $r.L$ where $L$ is the least common multiple of the $q_k$. The tangent map at $y$ at the period
coincides with the identity on its central part.
Consequently, one can by a small perturbation turn $y$
to a hyperbolic periodic point.
This shows that a diffeomorphism arbitrarily $C^1$-close to $f$
in $\diff^1_\omega(M)$ has a hyperbolic periodic orbit contained in an arbitrarily small neighborhood of the orbit of $x$ and having a period which is not a multiple
of $\ell$.

We end with a Baire argument.
For $n,\ell\geq 1$, let us denote by $D_{n,\ell}\subset \cG_\omega'$ the subset of diffeomorphisms whose periodic orbits of period less than $n$ which are not a multiple of $\ell$ are
$1/n$-approximated by hyperbolic periodic orbits whose period is not a multiple of
$\ell$.
Since the periodic points of period less than $n$ are finite and vary
continuously with the diffeomorphism, this set is open; it is dense
by the first part of the proof. We then set $\cG_\omega=\bigcap_{n,\ell}D_{n,\ell}$.
\end{proof}

\vspace{30pt}


\noindent \textbf{Flavio Abdenur}\footnote{Partially supported by a PQ/CNPq grant and by a ``Jovem Cientista do Nosso Estado''/FAPERJ grant.} 
({\tt fabdenur"AT"yahoo.com})

\vspace{10pt}

\noindent \textbf{Sylvain Crovisier}\footnote{Partially supported by the ANR project \emph{DynNonHyp} BLAN08-2 313375.}
({\tt sylvain.crovisier"AT"math.u-psud.fr})

\noindent  CNRS - Laboratoire de mat\'ematiques d'Orsay, UMR 8628, B\^at. 425

\noindent  Universit\'e Paris-Sud 11, 91405 Orsay, France
\end{document}